\documentclass[12pt]{article}
\usepackage[centertags]{amsmath}
\usepackage{amsfonts}
\usepackage{color,graphicx,shortvrb}
\usepackage{amssymb}
\usepackage{amsthm}
\usepackage{psfrag}
\usepackage{empheq}
\usepackage{newlfont}
\usepackage{epstopdf}% To incorporate .eps illustrations using PDFLaTeX, etc.
\usepackage{subfigure}% Support for small, `sub' figures and tables
%%%%%%%%%%%%%%%%%%%%%%%%%%%%%%%%%%%%%%%%%

%Ensembles de nombres
%\usepackage[active]{srcltx}  %SRC Specials for DVI search

\newtheorem{theorem}{Theorem }
\newtheorem{remark}{Remark }
\newtheorem{lemma}{Lemma }

%Fuzz -------------------------------------------------------------------
\hfuzz2pt
% Line spacing -----------------------------------------------------------
\topmargin=-0.8truecm \textwidth=6 truein\textheight=9truein
\oddsidemargin=0.3truecm  \flushbottom
% MATH -------------------------------------------------------------------

% THEOREMS ---------------------------------------------------------------
\newtheorem{definition}{Definition}
\newtheorem{proposition}{Proposition}

%%% ----------------------------------------------------------------------

\begin{document}
% ------------------------------------------------------------------------
\normalsize

\title{Linearized stability analysis of Caputo-Katugampola fractional-order nonlinear systems}
\author{{\small D. Boucenna \thanks{Laboratory of Advanced Materials,
Faculty of Sciences, Badji Mokhtar-Annaba University, P.O. Box 12,
Annaba, 23000, Algeria}, \ A. Ben
Makhlouf \thanks{Department of Mathematics, College of Science, Jouf University, Sakaka, Saudi Arabia} \thanks{Department of Mathematics, Faculty of Sciences of
Sfax, Route Soukra, BP 1171, 3000 Sfax, Tunisia}, \ O. Naifar
\thanks{CEM Lab, Department of Electrical Engineering, National
School of Engineering, University of Sfax, Tunisia}, \ A.
Guezane-Lakoud
\thanks{Laboratory of Advanced Materials, Faculty of Sciences,
Badji Mokhtar-Annaba University, P.O. Box 12, Annaba, 23000,
Algeria}, \ M. A. Hammami
\thanks{Department of Mathematics, Faculty of Sciences of
Sfax, Route Soukra, BP 1171, 3000 Sfax, Tunisia}
 }}
\date {}

\maketitle
\begin{abstract}
In this paper, a linearized asymptotic stability result for a
Caputo-Katugampola fractional-order systems is described. An application
is given to demonstrate the validity of the proposed
results.\end{abstract}

{\bf 2010 Mathematics Subject Classification:} 34A34, 34A08, 65L20.\\

{\bf Keywords:}  Fractional-order
 systems, asymptotic stability, Caputo-Katugampola derivative.\\

\section{Introduction}
The integer-order calculus is inconvenient for several physical
systems whose true dynamics contain fractional (non-integer)
derivatives. To accurately model these systems, the
fractional-order differential equations are used. For example heat
transfer systems \cite{2}, financial systems \cite{3} and
electromagnetic systems \cite{1}, have been modelled using the
fractional-order calculus. In the last years, the use of
fractional-order equations in the stability theory has distinctly
risen \cite{art1,6,8,3,art2,4,5,12,11,10,9}, and several works
have been done in this context, we cite for example two main
methods. Firstly, the Lyapunov's first method: as the method of
linearization, the study of the linear equation by means of
Lyapunov exponents and the theorem of linearized asymptotic
stability \cite{8} or \cite{expon}. Secondly, Lyapunov's Second
Method: as the method of Lyapunov functions, for instance,
see \cite{40} or \cite{15}.\\
In this paper, we propose a Lyapunov's first method for the zero solution of the following Caputo-Katugampola fractional-order system of order $\alpha \in (0,1)$, $\rho> 0$:

\begin{equation}
^{C}D_{t_0^{+}}^{\alpha ,\rho }x\left( t\right) =A x(t)+
f\left(x\left( t\right) \right),\ t \geq t_0, \label{1.1s}
\end{equation}%
where $A\in \mathbb{R}^{d\times d}$ and $f$ is a locally Lipschitz
continuous function satisfying that
$$f(0)=0,$$
\begin{equation}\label{1.3}
\lim_{r\longrightarrow 0}\ell_f\left( r\right)=0,
\end{equation}
where
\begin{equation}\label{1.4}
\ell_f\left( r\right) :=\sup_{x,y\in B_{%
%TCIMACRO{\U{211d} }%
%BeginExpansion
\mathbb{R}
%EndExpansion
^{d}}\left( 0,r\right) }\frac{\left\Vert f\left( x\right) -f\left(
y\right) \right\Vert }{\left\Vert x-y\right\Vert }.
\end{equation}%
The asymptotic stability of the zero solution of the linear
Caputo-Katugampola fractional-order system:
\begin{equation}\label{1.3nb}
^{C}D_{t_0^{+}}^{\alpha ,\rho }x\left( t\right) =A x(t)
\end{equation}
is known to be equivalent to  the spectrum $\sigma(A)$ satisfies
$\sigma(A)\subset\{\lambda \in \mathbb{C}:\
|arg(\lambda)|>\frac{\alpha \pi}{2}|\}$ see \cite{3m}. It is to be
demonstrated that if the zero solution of \eqref{1.3nb} is
asymptotically stable then the zero
solution of \eqref{1.1s} is asymptotically stable which is our main result of theorem \ref{the3.1}.\\
For the proof of such theorem, two
main steps are presented. The first step consist on the
transformation of the linear part to a matrix which is "very
close" to a diagonal matrix. The second step describes the
construction of an appropriate Lyapunov-Perron operator whose aim
is to present a family of operators with the property that any
solution of the nonlinear system can be interpreted as a fixed
point. Finally, an application
is given to show the validity of the theoretical result.  In the special case $\rho=1$ Cong et al. \cite{8} developed the Lyapunov's first method for the zero solution of the Caputo fractional-order differential equations of order $\alpha \in (0,1)$.  Our results
presented in this paper mainly extend the work of Cong et al. \cite{8}.\\
The rest of the paper is organized as follows. Useful results in
relation with the fractional-order calculus and other
preliminaries are presented in section $2$. Then, the main
contributions, dealing with the linearized asymptotic stability
for Caputo-Katugampola fractional differential equations, is given in
section $3$. Finally, an application to Caputo-Katugampola
fractional-order Lorenz system is presented in section $4$ to show
the efficiency of the proposed approach.

\section{Preliminaries}

In this section, let us revisit some basics of the fractional
calculus. We adopt the notations of the Caputo-Katugampola fractional
integral and derivative from \cite{39,40,37,38}.

\begin{definition}
\label{def2.1} (Katugampola fractional integral)
Given $\alpha>0$, $\rho>0$ and an interval $[a,b]$ of $\mathbb{R}$, where $0<a<b$. The Katugampola fractional integral of a function $x\in L^1([a,b])$ is defined by
\begin{equation*}
I_{a^{+}}^{\alpha ,\rho }u\left( t\right) =\frac{\rho ^{1-\alpha
}}{\Gamma \left( \alpha \right) }\int\limits_{a}^{t}\frac{s^{\rho
-1}u\left( s\right) }{\left( t^{\rho }-s^{\rho }\right) ^{1-\alpha
}}ds,
\end{equation*}
where $\Gamma$ is the Gamma function.
\end{definition}

\begin{definition}
\label{def2.23219} (Katugampola fractional derivative) Given $0<\alpha<1$, $\rho>0$ and an interval $[a,b]$ of $\mathbb{R}$, where $0<a<b$. The
Katugampola fractional derivative is defined by
\begin{equation*}
D_{a^{+}}^{\alpha ,\rho }u\left( t\right) =\frac{\rho ^{\alpha
}}{\Gamma \left( 1-\alpha \right)
} t^{1-\rho} \displaystyle\frac{d}{dt}\int\limits_{a}^{t}\frac{s^{\rho -1}u\left( s\right)
}{\left( t^{\rho }-s^{\rho }\right) ^{\alpha }}ds.
\end{equation*}
\end{definition}

\begin{definition}
\label{def2.2} (Caputo-Katugampola fractional derivative) Given $0<\alpha<1$, $\rho>0$ and an interval $[a,b]$ of $\mathbb{R}$, where $0<a<b$. The
Caputo-Katugampola fractional derivative is defined by
\begin{align}
% \nonumber to remove numbering (before each equation)
^{C}D_{a^{+}}^{\alpha ,\rho }u\left( t\right) =&D_{a^{+}}^{\alpha ,\rho }[u\left( t\right)-u\left( a\right)]\nonumber\\
&=\frac{\rho ^{\alpha
}}{\Gamma \left( 1-\alpha \right)
} t^{1-\rho} \displaystyle\frac{d}{dt}\int\limits_{a}^{t}\frac{s^{\rho -1}[u\left( s\right)-u(a)]
}{\left( t^{\rho }-s^{\rho }\right) ^{\alpha }}ds.\nonumber
\end{align}
\end{definition}

\begin{lemma}
\label{lem2.100} Let $h:[t_0,+\infty)\rightarrow \mathbb{R} $ be a
continuous function. Then, the semigroup property holds
\begin{equation*}
I_{a^{+}}^{\alpha ,\rho }I_{a^{+}}^{\beta
,\rho}u(t)=I_{a^{+}}^{\alpha+\beta ,\rho}u(t) ,\text{ \ }0<\alpha
,\ 0<\beta,\ 0<\rho .
\end{equation*}%
\end{lemma}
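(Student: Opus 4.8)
The plan is to reduce the Katugampola integral to a classical Riemann--Liouville convolution via the substitution $w=s^{\rho}$, after which the semigroup property follows from the Beta integral exactly as in the integer-order setting. First I would write out the composition explicitly by inserting the definition of Definition \ref{def2.1} twice:
$$
I_{a^{+}}^{\alpha ,\rho }I_{a^{+}}^{\beta ,\rho}u(t)=\frac{\rho ^{2-\alpha-\beta}}{\Gamma(\alpha)\Gamma(\beta)}\int_{a}^{t}\int_{a}^{s}\frac{s^{\rho-1}\xi^{\rho-1}u(\xi)}{(t^{\rho}-s^{\rho})^{1-\alpha}(s^{\rho}-\xi^{\rho})^{1-\beta}}\,d\xi\,ds.
$$
Since $u$ is continuous on the compact interval $[a,t]$ and the kernel singularities $(t^{\rho}-s^{\rho})^{\alpha-1}$ and $(s^{\rho}-\xi^{\rho})^{\beta-1}$ are locally integrable for $\alpha,\beta>0$, the double integral is absolutely convergent, so Fubini's theorem applies and I may interchange the order of integration over the triangle $a\le \xi\le s\le t$.

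After interchanging, the task reduces to evaluating the inner integral
$$
J(\xi):=\int_{\xi}^{t}\frac{s^{\rho-1}\,ds}{(t^{\rho}-s^{\rho})^{1-\alpha}(s^{\rho}-\xi^{\rho})^{1-\beta}}.
$$
Here I would substitute $w=s^{\rho}$, so that $s^{\rho-1}\,ds=\rho^{-1}\,dw$, converting $J(\xi)$ into a standard Riemann--Liouville kernel integral in $w$ over $[\xi^{\rho},t^{\rho}]$, and then rescale by $w=\xi^{\rho}+(t^{\rho}-\xi^{\rho})v$ with $v\in[0,1]$. This collapses $J(\xi)$ to $\rho^{-1}(t^{\rho}-\xi^{\rho})^{\alpha+\beta-1}B(\beta,\alpha)$, where $B$ denotes the Beta function.

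Finally I would invoke the identity $B(\beta,\alpha)=\Gamma(\alpha)\Gamma(\beta)/\Gamma(\alpha+\beta)$. Substituting $J(\xi)$ back, the prefactor $\rho^{2-\alpha-\beta}/(\rho\,\Gamma(\alpha)\Gamma(\beta))$ reduces to $\rho^{1-\alpha-\beta}/\Gamma(\alpha+\beta)$ and leaves precisely
$$
\frac{\rho^{1-\alpha-\beta}}{\Gamma(\alpha+\beta)}\int_{a}^{t}\frac{\xi^{\rho-1}u(\xi)}{(t^{\rho}-\xi^{\rho})^{1-\alpha-\beta}}\,d\xi=I_{a^{+}}^{\alpha+\beta,\rho}u(t),
$$
which is the claim. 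The only genuinely delicate point is the justification of Fubini near the diagonal $\xi=s$ and the endpoint $s=t$; everything afterwards is a routine change of variables, and the substitution $w=s^{\rho}$ is the key observation that makes the Katugampola computation identical to the classical Riemann--Liouville one.
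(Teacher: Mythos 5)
Your argument is correct, and every step checks out: the double-integral representation, the interchange over the triangle $a\le\xi\le s\le t$, the substitution $w=s^{\rho}$ followed by the affine rescaling giving $J(\xi)=\rho^{-1}(t^{\rho}-\xi^{\rho})^{\alpha+\beta-1}B(\beta,\alpha)$, and the final bookkeeping $\rho^{2-\alpha-\beta}/\rho=\rho^{1-\alpha-\beta}$ with $B(\beta,\alpha)/\bigl(\Gamma(\alpha)\Gamma(\beta)\bigr)=1/\Gamma(\alpha+\beta)$. There is, however, nothing in the paper to compare it against: Lemma \ref{lem2.100} is stated without proof, being imported from the cited literature on the Katugampola calculus \cite{37,38,39,40} (note also the paper's typo there, hypothesizing a function $h$ but writing the identity for $u$). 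Your route is the standard Dirichlet--Fubini--Beta computation, and the point you flag as delicate is actually free of charge: the integrand is nonnegative once $|u|$ is replaced by its sup on $[a,t]$, so Tonelli justifies the interchange, and absolute convergence follows because the iterated kernel integral evaluates to the same finite Beta expression. An alternative, slightly shorter route --- more in the spirit of the paper itself --- is to observe that the substitution $w=s^{\rho}/\rho$ (exactly the change of variable the authors use in Remark \ref{pro2.2}) identifies $I_{a^{+}}^{\alpha,\rho}$ with the classical Riemann--Liouville integral of order $\alpha$ applied to the transformed function $\widetilde{u}(w)=u\bigl((\rho w)^{1/\rho}\bigr)$ on $[a^{\rho}/\rho,t^{\rho}/\rho]$; since this identification is compatible with composition, the lemma then follows at once from the known Riemann--Liouville semigroup property, with no Beta-function computation needed. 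Your version redoes that classical computation inline, which costs a little more writing but has the merit of being fully self-contained.
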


\begin{lemma}
\label{lem2.2}The Katugampola fractional integral of \ $u\left(
t\right) =\left( \frac{t^{\rho }-a^{\rho }}{\rho }\right) ^{\beta
}$ has the result
\begin{equation*}
\frac{\Gamma \left( \beta +1\right) }{\Gamma \left( \alpha +\beta +1\right) }%
\left( \frac{t^{\rho }-a^{\rho }}{\rho }\right) ^{\beta +\alpha
},\text{ \ \ }-1<\beta ,\ 0<\alpha,\  0<\rho .
\end{equation*}
\end{lemma}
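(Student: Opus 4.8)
The plan is to reduce the integral to a classical Beta-function evaluation by two successive changes of variables. Inserting $u(s)=\left(\frac{s^{\rho}-a^{\rho}}{\rho}\right)^{\beta}$ into the definition of $I_{a^{+}}^{\alpha,\rho}$ from Definition \ref{def2.1} gives
$$
I_{a^{+}}^{\alpha ,\rho }u(t)=\frac{\rho^{1-\alpha}}{\Gamma(\alpha)}\int_{a}^{t}\frac{s^{\rho-1}}{(t^{\rho}-s^{\rho})^{1-\alpha}}\left(\frac{s^{\rho}-a^{\rho}}{\rho}\right)^{\beta}ds .
$$
The weight $s^{\rho-1}\,ds$ in the integrand is exactly what is produced by differentiating $s^{\rho}$, which motivates the first substitution.

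First I would set $\tau=s^{\rho}$, so that $s^{\rho-1}\,ds=\frac{1}{\rho}\,d\tau$ and the limits become $a^{\rho}$ and $t^{\rho}$. This eliminates the $s^{\rho-1}$ factor and, after collecting the powers of $\rho$ (namely $\rho^{1-\alpha}\cdot\rho^{-1}\cdot\rho^{-\beta}=\rho^{-\alpha-\beta}$), turns the expression into
$$
\frac{\rho^{-\alpha-\beta}}{\Gamma(\alpha)}\int_{a^{\rho}}^{t^{\rho}}\frac{(\tau-a^{\rho})^{\beta}}{(t^{\rho}-\tau)^{1-\alpha}}\,d\tau ,
$$
a Riemann--Liouville--type integral in the variable $\tau$ over $[a^{\rho},t^{\rho}]$.

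Next I would normalise the interval by writing $\tau-a^{\rho}=(t^{\rho}-a^{\rho})w$ with $w\in[0,1]$, so that $t^{\rho}-\tau=(t^{\rho}-a^{\rho})(1-w)$ and $d\tau=(t^{\rho}-a^{\rho})\,dw$. Factoring out the powers of $(t^{\rho}-a^{\rho})$ leaves a factor $(t^{\rho}-a^{\rho})^{\alpha+\beta}$ multiplied by the standard Beta integral $\int_{0}^{1}w^{\beta}(1-w)^{\alpha-1}\,dw=B(\beta+1,\alpha)$. Substituting $B(\beta+1,\alpha)=\frac{\Gamma(\beta+1)\Gamma(\alpha)}{\Gamma(\alpha+\beta+1)}$, the $\Gamma(\alpha)$ cancels the one in the prefactor, while $\rho^{-\alpha-\beta}(t^{\rho}-a^{\rho})^{\alpha+\beta}$ recombines into $\left(\frac{t^{\rho}-a^{\rho}}{\rho}\right)^{\alpha+\beta}$, which is precisely the claimed formula.

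The computation is entirely routine; the only point deserving attention is convergence of the Beta integral, and this is exactly where the hypotheses enter. Integrability at $w=0$ requires $\beta>-1$ and integrability at $w=1$ requires $\alpha>0$, so I would remark at the outset that the assumptions $-1<\beta$ and $0<\alpha$ render every integral above absolutely convergent and thereby justify the two substitutions. There is no real obstacle beyond bookkeeping the $\rho$-powers correctly through both changes of variables.
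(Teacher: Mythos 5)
Your proof is correct. The paper states Lemma \ref{lem2.2} without any proof of its own (it is quoted as a known identity from the Katugampola fractional calculus literature), and your argument --- substituting $\tau=s^{\rho}$ to pass to a Riemann--Liouville integral, then normalising via $\tau-a^{\rho}=(t^{\rho}-a^{\rho})w$ to obtain $B(\beta+1,\alpha)=\frac{\Gamma(\beta+1)\Gamma(\alpha)}{\Gamma(\alpha+\beta+1)}$ --- is exactly the standard derivation, with the hypotheses $-1<\beta$ and $0<\alpha$ correctly identified as the convergence conditions for the Beta integral and the $\rho$-power bookkeeping carried through without error.
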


\begin{lemma}
\label{lem2.3} If u is a constant, then the fractional derivative of $u$ is $%
^{C}D_{a^{+}}^{\alpha ,\rho }u\left( t\right) =0$.
\end{lemma}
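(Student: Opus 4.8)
The plan is to read the conclusion straight off the definition of the Caputo-Katugampola derivative in Definition \ref{def2.2}, exploiting the fact that this derivative is built by first subtracting the initial value $u(a)$ and only then applying the Katugampola operator. First I would write $u(t)\equiv c$ for some constant $c\in\mathbb{R}$, so that in particular $u(a)=c$. Substituting into the defining integral, the bracketed term satisfies $u(s)-u(a)=c-c=0$ for every $s\in[a,t]$, and hence the integrand
$$\frac{s^{\rho -1}\,[u(s)-u(a)]}{\left(t^{\rho}-s^{\rho}\right)^{\alpha}}$$
vanishes identically on the entire interval of integration.

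Consequently the inner integral $\int_{a}^{t}\frac{s^{\rho-1}[u(s)-u(a)]}{(t^{\rho}-s^{\rho})^{\alpha}}\,ds$ is the zero function of $t$, so its $t$-derivative is $0$, and multiplying by the prefactor $\frac{\rho^{\alpha}}{\Gamma(1-\alpha)}t^{1-\rho}$ still yields $^{C}D_{a^{+}}^{\alpha ,\rho}u(t)=0$, as claimed. I do not anticipate any genuine obstacle here, and in particular no differentiation-under-the-integral justification is needed, since the integrand is identically zero rather than merely small. The only point worth flagging is conceptual: the vanishing is a feature of the Caputo-type construction specifically, because the subtraction of $u(a)$ is precisely what annihilates the constant. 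By contrast, the Riemann-Liouville-type Katugampola derivative of Definition \ref{def2.23219}, which lacks this subtraction, would \emph{not} kill a constant; it would instead produce a nonzero term proportional to $(t^{\rho}-a^{\rho})^{-\alpha}$. Thus the lemma is really a direct corollary of the defining formula, and I would present it as such rather than routing it through Lemma \ref{lem2.2}.
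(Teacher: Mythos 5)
Your proof is correct and is exactly the argument the paper leaves implicit: Lemma \ref{lem2.3} is stated without proof, and the intended justification is precisely your observation that Definition \ref{def2.2} applies the Katugampola operator to $u(t)-u(a)$, which vanishes identically when $u$ is constant. Your side remark is also accurate --- the Riemann--Liouville-type derivative of Definition \ref{def2.23219} applied to a constant $c$ yields $\frac{c}{\Gamma(1-\alpha)}\left(\frac{t^{\rho}-a^{\rho}}{\rho}\right)^{-\alpha}\neq 0$, so the subtraction of $u(a)$ is indeed what makes the lemma hold.
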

\noindent Since $f$ is a locally Lipschitz continuous function,
then we have the existence and uniqueness of solutions of initial
value problems (\ref{1.1s}) (see \cite{37}).Let $x(t,x_0)$ denote
the solution of (\ref{1.1s}) on its maximal interval of existence
$I = [t_0, t_{max}(x_0))$ with $t_0 < t_{max}(x_0) \leq \infty$.

\begin{definition} \label{def2.3} The equilibrium point $x^*=0$ of (\ref{1.1s}) is called:\newline $\bullet $ stable if for any
$\varepsilon >0$ there is a $\delta =\delta \left( \varepsilon
\right) >0$ such that for every $\left\Vert x_{0}\right\Vert \leq
$ $\delta $ we have $t_{max}(x_0)=\infty$ and
\begin{equation*}
\left\Vert x\left( t,x_0 \right) \right\Vert \leq \varepsilon \text{ \ for all }%
t\geq t_0.
\end{equation*}%
\newline
 $\bullet $asymptotically stable if it is stable and, furthermore, there exists $c>0 $ such that
for every $\left\Vert x_{0}\right\Vert \leq $ $c$ we have $$%
\displaystyle\lim_{t\rightarrow \infty }x\left( t,x_0 \right)
=0.$$\newline
\end{definition}

\begin{definition}
The Mittag-Leffler function with two parameters is defined as
$$E_{\alpha,\beta}(z)=\displaystyle\sum_{k=0}^{+\infty} \frac{z^k}{\Gamma(k \alpha +\beta)},$$
where $\alpha>0$, $\beta>0$, $z\in \mathbb{C}$.\\
When $\beta=1$, one has $E_{\alpha}(z)=E_{\alpha,1}(z)$.
\end{definition}

\begin{proposition}
\label{pro2.1}\cite{8} Let $\lambda $ be an arbitrary complex
number with $\frac{\alpha \pi }{2}<\left\vert \arg \lambda
\right\vert \leq \pi .$ Then, the following statements
hold:\newline $\left( i\right) $ There exist a positive constant
$M(\alpha ,\lambda )$ and a positive number $t_{1}$\ such that
\begin{equation*}
\left\vert t^{\alpha -1}E_{\alpha ,\alpha }\left( \lambda
t^{\alpha }\right) \right\vert \leq \frac{M(\alpha ,\lambda
)}{t^{\alpha +1}}\text{ \ \ for any }t>t_{1}.
\end{equation*}%
$\left( ii\right) $ There exists a positive constant $C(\alpha
,\lambda )$
such that%
\begin{equation*}
\sup_{t\geq 0}\int\limits_{0}^{t}\left\vert \left( t-s\right)
^{\alpha -1}E_{\alpha ,\alpha }\left( \lambda \left( t-s\right)
^{\alpha }\right) \right\vert ds\leq C(\alpha ,\lambda ).
\end{equation*}
\end{proposition}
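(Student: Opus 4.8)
The plan is to base both statements on the classical asymptotic expansion of the Mittag-Leffler function in the sector where the argument stays away from the positive real axis. Recall that for $0<\alpha<1$ and any real $\beta$, whenever $\tfrac{\alpha\pi}{2}<\mu\le|\arg z|\le\pi$ one has, for each fixed integer $N\ge 1$,
\[
E_{\alpha,\beta}(z)=-\sum_{k=1}^{N}\frac{z^{-k}}{\Gamma(\beta-\alpha k)}+O\bigl(|z|^{-N-1}\bigr),\qquad |z|\to\infty .
\]
The single most important observation is that in the case at hand, $\beta=\alpha$, the first term ($k=1$) carries the factor $1/\Gamma(\alpha-\alpha)=1/\Gamma(0)=0$ and therefore vanishes; hence $E_{\alpha,\alpha}(z)=O\bigl(|z|^{-2}\bigr)$ as $|z|\to\infty$ in that sector. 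For a given $\lambda$ with $\tfrac{\alpha\pi}{2}<|\arg\lambda|\le\pi$ I would fix $\mu$ with $\tfrac{\alpha\pi}{2}<\mu\le|\arg\lambda|$ so that the expansion is available along the whole ray $\arg z=\arg\lambda$.

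To prove (i), I would set $z=\lambda t^{\alpha}$, so that $\arg z=\arg\lambda$ stays fixed in the admissible sector while $|z|=|\lambda|\,t^{\alpha}\to\infty$ as $t\to\infty$. The decay estimate then yields $\bigl|E_{\alpha,\alpha}(\lambda t^{\alpha})\bigr|\le K(\alpha,\lambda)\,t^{-2\alpha}$ for all sufficiently large $t$, say $t>t_{1}$. Multiplying by $t^{\alpha-1}$ gives
\[
\bigl|t^{\alpha-1}E_{\alpha,\alpha}(\lambda t^{\alpha})\bigr|\le K(\alpha,\lambda)\,t^{\alpha-1-2\alpha}=K(\alpha,\lambda)\,t^{-\alpha-1},
\]
which is precisely the claimed bound with $M(\alpha,\lambda)=K(\alpha,\lambda)$.

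For (ii), the change of variables $u=t-s$ turns the integral into $\int_{0}^{t}\bigl|u^{\alpha-1}E_{\alpha,\alpha}(\lambda u^{\alpha})\bigr|\,du$, so it suffices to show that $\int_{0}^{\infty}\bigl|u^{\alpha-1}E_{\alpha,\alpha}(\lambda u^{\alpha})\bigr|\,du$ converges; its value then serves as the uniform bound $C(\alpha,\lambda)$. I would split the integral at $t_{1}$. On $[t_{1},\infty)$ the bound from (i) dominates the integrand by $M(\alpha,\lambda)\,u^{-\alpha-1}$, whose integral is finite since $\alpha+1>1$. On $(0,t_{1}]$ the function $E_{\alpha,\alpha}$ is entire, hence bounded on the compact image, while $u^{\alpha-1}$ is integrable near $0$ because $\alpha-1>-1$; so this piece is finite as well. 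Adding the two contributions bounds the integral uniformly in $t$.

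The one genuinely nontrivial ingredient is the asymptotic expansion itself, which I would quote from the standard references rather than reprove; everything else is elementary. The only point demanding a little care is verifying that the admissible sector for the expansion can be arranged to contain the ray $\arg z=\arg\lambda$ for every $\lambda$ permitted by the hypothesis, which works because $\mu$ may be chosen arbitrarily close to $\tfrac{\alpha\pi}{2}$ from above.
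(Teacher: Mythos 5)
Your proof is correct and is essentially the standard argument: the paper itself gives no proof of Proposition \ref{pro2.1}, quoting it from \cite{8}, and the proof there proceeds exactly as you do --- invoking Podlubny's asymptotic expansion of $E_{\alpha,\beta}$ in the sector $\mu\leq\left\vert \arg z\right\vert \leq\pi$ with $\frac{\alpha\pi}{2}<\mu<\alpha\pi$, noting that for $\beta=\alpha$ the leading term vanishes because $1/\Gamma(0)=0$ so that $E_{\alpha,\alpha}(z)=O(\left\vert z\right\vert^{-2})$, and then deducing (ii) by the substitution $u=t-s$ and splitting the integral at $t_{1}$, using boundedness of the entire function $E_{\alpha,\alpha}$ near the origin together with integrability of $u^{\alpha-1}$ there and of $u^{-\alpha-1}$ at infinity.
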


\begin{remark}
\label{pro2.2} Let $\lambda $ be an arbitrary complex number with $\frac{%
\alpha \pi }{2}<\left\vert \arg \lambda \right\vert \leq \pi.$
Then
\begin{equation*}
\sup_{t\geq a}\int\limits_{a}^{t}\left\vert \left( \frac{t^{\rho }-s^{\rho }%
}{\rho }\right) ^{\alpha -1}E_{\alpha ,\alpha }\left( \lambda \left( \frac{%
t^{\rho }-s^{\rho }}{\rho }\right) ^{\alpha }\right) s^{\rho
-1}\right\vert ds\leq C(\alpha ,\lambda ).
\end{equation*}
\end{remark}
Indeed, using the change of variable $u=\frac{s^{\rho }}{\rho }$,
we obtain
\begin{eqnarray*}
&&\int\limits_{a}^{t}\left\vert \left( \frac{t^{\rho }-s^{\rho }}{\rho }%
\right) ^{\alpha -1}E_{\alpha ,\alpha }\left( \lambda \left(
\frac{t^{\rho
}-s^{\rho }}{\rho }\right) ^{\alpha }\right) s^{\rho -1}\right\vert ds \\
&=&\int\limits_{\frac{a^{\rho }}{\rho }}^{\frac{t^{\rho }}{%
\rho }}\left\vert \left( \frac{t^{\rho }}{\rho }-u\right) ^{\alpha
-1}E_{\alpha ,\alpha }\left( \lambda \left( \frac{t^{\rho }}{\rho
}-u\right) ^{\alpha }\right) \right\vert du
\end{eqnarray*}
It follows from Proposition \ref{pro2.1}, that
$$\sup_{t\geq a}\int\limits_{a}^{t}\left\vert \left( \frac{t^{\rho }-s^{\rho }}{\rho }%
\right) ^{\alpha -1}E_{\alpha ,\alpha }\left( \lambda \left(
\frac{t^{\rho }-s^{\rho }}{\rho }\right) ^{\alpha }\right) s^{\rho
-1}\right\vert ds \leq C(\alpha ,\lambda ).$$

\noindent  We consider the following fractional differential
equation
\begin{equation*}
^{C}D_{a^{+}}^{\alpha ,\rho }u\left( t\right) =\lambda u\left(
t\right) +h\left( t\right) ,u\left( a\right) =c,0<\alpha
<1,0<\rho,
\end{equation*}%
where $h$ is a continuous function on $[a,+\infty[$.\\
The equation can be transferred to an equivalent integral one as \
\begin{equation*}
u\left( t\right) =u\left( a\right) +\lambda I_{a^{+}}^{\alpha
,\rho }u\left( t\right) +I_{a^{+}}^{\alpha ,\rho }h\left( t\right)
,\text{ }u\left( a\right) =c.
\end{equation*}%
To obtain an explicit clear solution, we apply the method of
successive approximation. Set $u_{0}=u\left( a\right) =c$ and
\begin{equation*}
u_{n+1}=u_{0}+\lambda I_{a^{+}}^{\alpha ,\rho
}u_{n}+I_{a^{+}}^{\alpha ,\rho }h\left( t\right) ,\text{ }0\leq n,
\end{equation*}%
\newline It follows that%
\begin{eqnarray*}
u_{1} &=&u_{0}+\lambda I_{a^{+}}^{\alpha ,\rho
}u_{0}+I_{a^{+}}^{\alpha
,\rho }h\left( t\right) \\
&=&u_{0}+\frac{u_{0}\lambda }{\Gamma \left( \alpha +1\right) }\left( \frac{%
t^{\rho }-a^{\rho }}{\rho }\right) ^{\alpha }+I_{a^{+}}^{\alpha
,\rho
}h\left( t\right) \\
&=&\sum_{k=0}^{1}\frac{u_{0}\lambda^k }{\Gamma \left( k\alpha +1\right) }%
\left( \frac{t^{\rho }-a^{\rho }}{\rho }\right) ^{k\alpha
}+I_{a^{+}}^{\alpha ,\rho }h\left( t\right) ,
\end{eqnarray*}%
\begin{equation*}
u_{2}=\sum_{k=0}^{2}\frac{u_{0}\lambda^k}{\Gamma \left( k\alpha +1\right) }%
\left( \frac{t^{\rho }-a^{\rho }}{\rho }\right) ^{k\alpha
}+\lambda I_{a^{+}}^{2\alpha ,\rho }h\left( t\right)
+I_{a^{+}}^{\alpha ,\rho }h\left( t\right) ,
\end{equation*}%
and

\begin{eqnarray*}
u_{n} &=&u_{0}\sum_{k=0}^{n}\frac{\lambda^k}{\Gamma \left( k\alpha +1\right) }%
\left( \frac{t^{\rho }-a^{\rho }}{\rho }\right) ^{k\alpha
}+\sum_{k=0}^{n-1}\lambda ^{k}I_{a^{+}}^{\alpha \left( k+1\right)
,\rho
}h\left( t\right) \\
&=&u_{0}\sum_{k=0}^{n}\frac{\lambda^k}{\Gamma \left( k\alpha +1\right) }%
\left( \frac{t^{\rho }-a^{\rho }}{\rho }\right) ^{k\alpha } \\
&&+\int\limits_{a}^{t}\sum_{k=0}^{n-1}\lambda ^{k}\frac{\rho
^{1-\alpha
\left( k+1\right) }}{\Gamma \left( \alpha k+\alpha \right) }\frac{s^{\rho -1}%
}{\left( t^{\rho }-s^{\rho }\right) ^{1-\alpha \left( k+1\right)
}}h\left(
s\right) ds \\
&=&u_{0}\sum_{k=0}^{n}\frac{\lambda^k}{\Gamma \left( k\alpha +1\right) }%
\left( \frac{t^{\rho }-a^{\rho }}{\rho }\right) ^{k\alpha } \\
&&+\int\limits_{a}^{t}\sum_{k=0}^{n-1}\frac{\lambda ^{k}}{\Gamma
\left(
\alpha k+\alpha \right) }s^{\rho -1}\left( \frac{t^{\rho }-s^{\rho }}{\rho }%
\right) ^{\alpha \left( k+1\right) -1}h\left( s\right) ds \\
&=&u_{0}\sum_{k=0}^{n}\frac{\lambda^k}{\Gamma \left( k\alpha +1\right) }%
\left( \frac{t^{\rho }-a^{\rho }}{\rho }\right) ^{k\alpha } \\
&&+\int\limits_{a}^{t}\left( \frac{t^{\rho }-s^{\rho }}{\rho
}\right) ^{\alpha -1}s^{\rho -1}\sum_{k=0}^{n-1}\frac{\lambda
^{k}}{\Gamma \left( \alpha k+\alpha \right) }\left( \frac{t^{\rho
}-s^{\rho }}{\rho }\right) ^{\alpha k}h\left( s\right) ds.
\end{eqnarray*}%
For $n\rightarrow \infty ,$ we obtain
\begin{eqnarray*}
u\left( t\right) &=&u_{0}E_{\alpha }\left( \lambda \left(
\frac{t^{\rho
}-a^{\rho }}{\rho }\right) ^{\alpha }\right) \\
&&+\int\limits_{a}^{t}\left( \frac{t^{\rho }-s^{\rho }}{\rho
}\right) ^{\alpha -1}E_{\alpha ,\alpha }\left( \lambda \left(
\frac{t^{\rho }-s^{\rho }}{\rho }\right) ^{\alpha }\right) s^{\rho
-1}h\left( s\right) ds.
\end{eqnarray*}%
Using Theorem 6.37, pp. 146 in \cite{Shilov} , there exists a
nonsingular matrix $T\in
%TCIMACRO{\U{2102} }%
%BeginExpansion
\mathbb{C}
%EndExpansion
^{d\times d}$ transforming $A$ into the Jordan normal form, i.e.,
\begin{equation*}
T^{-1}AT=diag\left( A_{1},A_{2},...,A_{n}\right) ,
\end{equation*}%
where for $i=1,2,...,n$,
\begin{equation*}
A_{i}=\lambda _{i}id_{d_{i}\times d_{i}}+\eta _{i}N_{d_{i}\times
d_{i}},
\end{equation*}%
where $\eta _{i}\in \left\{ 0,1\right\} ,$ $\lambda _{i}\in $
$\left\{
\widehat{\lambda _{1}},\widehat{\lambda _{2}},...,\widehat{\lambda _{m}}%
\right\} ,$ and the nilpotent matrix $N_{d_{i}\times d_{i}}$ is
given by

\begin{equation*}
N_{d_{i}\times d_{i}}:=%
\begin{pmatrix}
0 & 1 & 0 &  &  & 0 & 0 \\
0 & 0 & 1 &  &  & 0 & 0 \\
. & . & . & . &  & . & . \\
. & . &  & . & . & . & . \\
. & . &  &  & . & . & . \\
0 & 0 &  &  &  & 0 & 1 \\
0 & 0 &  &  &  & 0 & 0%
\end{pmatrix}%
_{_{d_{i}\times d_{i}}}
\end{equation*}%
Let $\delta>0 $. Using the transformation $P_{i}=diag\left(
1,\delta ,...,\delta ^{d_{i}-1}\right) $, we get
\begin{equation*}
P_{i}^{-1}A_{i}P_{i}=\lambda _{i}id_{d_{i}\times d_{i}}+\delta
_{i}N_{d_{i}\times d_{i}},
\end{equation*}%
$\delta _{i}\in \left\{ 0,\delta \right\} .$ Therefore, by the
transformation
$y=\left( TP\right) ^{-1}x$ system (\ref{1.1s}) becomes%
\begin{equation}
^{C}D_{t_0^{+}}^{\alpha ,\rho }y\left( t\right) =diag\left(
J_{1},J_{2},...,J_{n}\right) y\left( t\right) +h\left( y\left(
t\right) \right) ,  \label{3.2}
\end{equation}%
where $J_{i}=\lambda _{i}id_{d_{i\times d_{i}}}$ for
$i=1,2,...,n,$ and
\begin{equation}
h\left( y\right) =diag\left( \delta _{1}N_{d_{1}\times
d_{1}},\delta _{2}N_{d_{2}\times d_{2}},...,\delta
_{n}N_{d_{n}\times d_{n}}\right) y\left( t\right) +\left(
TP\right) ^{-1}f\left( TPy\right) .  \label{3.3}
\end{equation}

\begin{remark}\cite{8}
\label{rem3.1} Note that the map $x\rightarrow diag\left( \delta
_{1}N_{d_{1}\times d_{1}},\delta _{2}N_{d_{2}\times
d_{2}},...,\delta _{n}N_{d_{n}\times d_{n}}\right) x$ is a
Lipschitz continuous function with Lipschitz constant $\delta $.
Thus, by (\ref{1.3}), we have
\begin{equation*}
h\left( 0\right) =0,\text{ \ }\lim_{r\rightarrow 0}\ell_h\left(
r\right) =\left\{
\begin{array}{l}
\delta \text{ \ if there exists }\delta _{i}=\delta , \\
0\text{\ otherwise\ .}%
\end{array}%
\right.
\end{equation*}
\end{remark}

\begin{remark}\cite{8}
\label{rem3.2} The type of stability of the zero solution of equations (%
\ref{1.1s}) and (\ref{3.2}) are the same.
\end{remark}
\noindent We denote by $C_{\infty }\left( \left[ t_0,+\infty
\right),\mathbb{R}^d\right)$ the space of all continuous functions
$ \xi : \left[ t_0,+\infty \right)\longrightarrow \mathbb{R}^d$
such that
$$\|\xi\|_{\infty}=\sup_{t\geq t_0}\|\xi(t)\|<\infty.$$
\noindent For any $x=\left( x^{1},x^{2},...,x^{n}\right) \in
\mathbb{R}^{d}=\mathbb{R}^{d_{1}}\times... \times
\mathbb{R}^{d_{n}},$ we define the operator
$$ \mathcal{F}_{x}:C_{ \infty}\left( \left[ t_0,+\infty
\right) ,%
%TCIMACRO{\U{211d} }%
%BeginExpansion
\mathbb{R}
%EndExpansion
^{d}\right) \rightarrow C_{\infty}\left( \left[ t_0,+\infty \right) ,%
%TCIMACRO{\U{211d} }%
%BeginExpansion
\mathbb{R}
%EndExpansion
^{d}\right) $$ as follows:
\begin{equation*}
\left( \mathcal{F} _{x}\xi \right) \left( t\right) =\left( \left(
\mathcal{F} _{x}\xi \right) ^{1}\left( t\right) ,\left(
\mathcal{F} _{x}\xi \right) ^{2}\left( t\right) ,...,\left(
\mathcal{F} _{x}\xi \right) ^{n}\left( t\right) \right) \text{ \ \
\ \ \ for }t\in \left[ t_0,+\infty \right).
\end{equation*}%
Where for $i=1,2,...,n$%
\begin{eqnarray*}
\left( \mathcal{F} _{x}\xi \right) ^{i}\left( t\right)
&=&E_{\alpha }\left( \left( \frac{t^{\rho }-t_0^{\rho }}{\rho
}\right) ^{\alpha }J_{i}\right) x^{i}
\\
&&+\int\limits_{t_0}^{t}\left( \frac{t^{\rho }-s^{\rho }}{\rho
}\right) ^{\alpha -1}E_{\alpha ,\alpha }\left( \left(
\frac{t^{\rho }-s^{\rho }}{\rho }\right) ^{\alpha }J_{i}\right)
s^{\rho -1}h^{i}\left( \xi \left( s\right) \right) ds
\end{eqnarray*}%
\begin{remark}
In the next section we will verify that $\mathcal{F} _{x}$ is well
defined. \end{remark}
\begin{remark}
Let $\xi \in C_{\infty}\left( \left[ t_0,+\infty \right),%
%TCIMACRO{\U{211d} }%
%BeginExpansion
\mathbb{R}
%EndExpansion
^{d}\right)$, Then $\xi $ is a solution of (\ref{3.2}) with $\xi \left( t_{0}\right) =x$ if and only if it is a fixed point of the operator $\mathcal{F}%
_{x}$.
\end{remark}
\section{Linearized stability of Caputo-Katugampola fractional-order systems}
In this section we study the asymptotic stability of the system
(\ref{1.1s}).
\newline
\begin{proposition}
\label{pro3.2} Assume that $\sigma(A)\subset\{\lambda \in
\mathbb{C}:\ |arg(\lambda)|>\frac{\alpha \pi}{2}|\}$. Then, there
exists a constant $C\left( \alpha ,\lambda \right) $ depending on
$\alpha $ and $\lambda :=\left( \lambda _{1},\lambda
_{2,}...,\lambda _{n}\right) $ such that for all $x \in
%TCIMACRO{\U{211d} }%
%BeginExpansion
\mathbb{R}
%EndExpansion
^{d}$ and $\xi ,\widehat{\xi }\in C_{\infty }\left( \left[
t_0,+\infty \right)
,%
%TCIMACRO{\U{211d} }%
%BeginExpansion
\mathbb{R}
%EndExpansion
^{d}\right) $, it holds%
\begin{equation}
\left\Vert \mathcal{F} _{x}\xi -\mathcal{F} _{x}\widehat{\xi
}\right\Vert _{\infty }\leq C\left( \alpha ,\lambda \right) \ell_h
\Big(max\left( \left\Vert \xi \right\Vert _{\infty },\left\Vert
\widehat{\xi }\right\Vert _{\infty }\right)\Big) \left\Vert \xi
-\widehat{\xi }\right\Vert _{\infty } \label{3.4}
\end{equation}
and
\begin{eqnarray}\label{conc2}
\left\Vert \mathcal{F}_{x}\xi \right\Vert _{\infty } &\leq
&\max_{1\leq i\leq n}\sup_{t\geq t_0}\Big|E_{\alpha }\left(
\lambda _{i}\left( \frac{t^{\rho }-t_0^{\rho }}{\rho }\right)
^{\alpha }\right)\Big| \left\Vert x\right\Vert \nonumber\\
&&+C\left( \alpha ,\lambda\right) \ell_h\left( r\right) \left\Vert
\xi \right\Vert _{\infty }.
\end{eqnarray}%
\end{proposition}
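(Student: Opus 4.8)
The plan is to prove both estimates by working componentwise, using the explicit form of $(\mathcal{F}_x\xi)^i$ and the fact that $J_i = \lambda_i \, \mathrm{id}_{d_i\times d_i}$ is a scalar multiple of the identity. Because each $J_i$ acts as multiplication by $\lambda_i$, the Mittag-Leffler terms $E_{\alpha,\alpha}\big(\big(\tfrac{t^\rho-s^\rho}{\rho}\big)^\alpha J_i\big)$ reduce to scalar quantities $E_{\alpha,\alpha}\big(\lambda_i\big(\tfrac{t^\rho-s^\rho}{\rho}\big)^\alpha\big)$, which is exactly the setting to which Remark~\ref{pro2.2} applies. First I would fix $i$ and subtract: the first (homogeneous) terms in $(\mathcal{F}_x\xi)^i$ and $(\mathcal{F}_x\widehat\xi)^i$ are identical (both equal $E_\alpha(\cdots)x^i$), so they cancel and only the integral terms remain in the difference.

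Next I would estimate the integral term. Taking norms inside the integral and using the Lipschitz-type control of $h$ via $\ell_h$, namely
\begin{equation*}
\big\|h^i(\xi(s)) - h^i(\widehat\xi(s))\big\| \leq \ell_h\Big(\max\big(\|\xi\|_\infty,\|\widehat\xi\|_\infty\big)\Big)\,\|\xi(s)-\widehat\xi(s)\|,
\end{equation*}
I would pull the constant $\ell_h(\cdots)$ out of the integral and bound $\|\xi(s)-\widehat\xi(s)\|$ by $\|\xi-\widehat\xi\|_\infty$. What remains inside the integral is precisely
\begin{equation*}
\int_{t_0}^t \Big|\big(\tfrac{t^\rho-s^\rho}{\rho}\big)^{\alpha-1} E_{\alpha,\alpha}\big(\lambda_i\big(\tfrac{t^\rho-s^\rho}{\rho}\big)^\alpha\big) s^{\rho-1}\Big|\,ds,
\end{equation*}
which by Remark~\ref{pro2.2} is bounded uniformly in $t$ by $C(\alpha,\lambda_i)$. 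Taking the maximum over $i=1,\dots,n$ and defining $C(\alpha,\lambda)$ as the largest of these constants yields \eqref{3.4}. The estimate \eqref{conc2} follows the same pattern applied to $\mathcal{F}_x\xi$ alone, where the homogeneous term now survives and is bounded by $\max_i \sup_t |E_\alpha(\lambda_i(\tfrac{t^\rho-t_0^\rho}{\rho})^\alpha)|\,\|x\|$; the integral term is handled identically, using $\ell_h(r)$ in place of the $\max$-argument since here one compares $\xi$ against the zero function and invokes $h(0)=0$.

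The main obstacle I anticipate is technical rather than conceptual: verifying that the uniform bound from Remark~\ref{pro2.2} genuinely applies with $a=t_0$ and with the correct $\lambda_i$, and confirming that the same constant controls every component simultaneously so that a single $C(\alpha,\lambda)$ suffices. One must also be careful that the supremum over $t\geq t_0$ commutes appropriately with the componentwise estimates—since the $C_\infty$ norm is the supremum over $t$ of the (finite-dimensional) norm of the vector, I would first establish the pointwise-in-$t$ bound for each component and only then take the supremum, ensuring the resulting constant is independent of $t$. The hypothesis $\sigma(A)\subset\{\lambda:\,|\arg\lambda|>\tfrac{\alpha\pi}{2}\}$ guarantees each $\lambda_i$ satisfies the angular condition $\tfrac{\alpha\pi}{2}<|\arg\lambda_i|\leq\pi$ required by Proposition~\ref{pro2.1} and Remark~\ref{pro2.2}, so this step should go through once the bookkeeping is in place.
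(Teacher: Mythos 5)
Your proposal is correct and follows essentially the same route as the paper: a componentwise estimate that cancels the homogeneous terms, pulls out the Lipschitz factor $\ell_h\big(\max(\|\xi\|_\infty,\|\widehat{\xi}\|_\infty)\big)$, bounds the remaining integral uniformly in $t$ via Remark \ref{pro2.2}, and sets $C(\alpha,\lambda)=\max_i C(\alpha,\lambda_i)$. Your derivation of \eqref{conc2} by comparing $\mathcal{F}_x\xi$ with the zero function (so that the homogeneous term $E_\alpha(\cdots)x^i$ survives and $h(0)=0$ handles the integral) is exactly how the paper deduces it from \eqref{3.4} with $\widehat{\xi}=0$.
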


\begin{proof}
For $i=1,2,...,n$, we have
\begin{eqnarray*}
\left\Vert \left( \mathcal{F} _{x}\xi \right) ^{i}\left( t\right)
-\left( \mathcal{F} _{x}\widehat{\xi }\right) ^{i}\left( t\right)
\right\Vert  &\leq &\ell_h \Big(max\left( \left\Vert \xi
\right\Vert _{\infty },\left\Vert \widehat{\xi }\right\Vert
_{\infty }\right)\Big)
\left\Vert \xi -\widehat{\xi }\right\Vert _{\infty }\times \\
&&\int\limits_{t_0}^{t}\left( \frac{t^{\rho }-s^{\rho }}{\rho
}\right) ^{\alpha -1}\Big|E_{\alpha ,\alpha }\left(\lambda_i
\left( \frac{t^{\rho }-s^{\rho }}{\rho }\right) ^{\alpha
}\right)\Big| s^{\rho -1}ds.
\end{eqnarray*}%
It follows from Remark \ref{pro2.2}, that%
\begin{eqnarray*}
\left\Vert \left( \mathcal{F} _{x}\xi \right) ^{i}-\left( \mathcal{F} _{%
x}\widehat{\xi }\right) ^{i}\right\Vert _{\infty } &\leq &\ell_h
\Big(max\left( \left\Vert \xi \right\Vert _{\infty },\left\Vert
\widehat{\xi }\right\Vert _{\infty }\right)\Big) C\left( \alpha
,\lambda _{i}\right) \left\Vert \xi -\widehat{\xi }\right\Vert
_{\infty }.
\end{eqnarray*}
Thus
\begin{equation*}
\left\Vert \mathcal{F} _{x}\xi -\mathcal{F} _{x}\widehat{\xi
}\right\Vert _{\infty }\leq C\left( \alpha ,\lambda\right) \ell_h
\Big(max\left( \left\Vert \xi \right\Vert _{\infty },\left\Vert
\widehat{\xi }\right\Vert _{\infty }\right)\Big) \left\Vert \xi
-\widehat{\xi }\right\Vert _{\infty },
\end{equation*}%
where $$C\left( \alpha ,\lambda\right) =\max \left\{ C\left(
\alpha ,\lambda _{1}\right) ,C\left( \alpha ,\lambda _{2} \right)
,...,C\left( \alpha ,\lambda _{n} \right) \right\}. $$ On the
other hand, for $\xi=0$, We have
$$\left( \mathcal{F} _{x}0 \right) ^{i}\left( t\right)
=E_{\alpha }\left( \left( \frac{t^{\rho }-t_0^{\rho }}{\rho
}\right) ^{\alpha }J_{i}\right) x^{i}.$$ Then using (\ref{3.4}),
 we get (\ref{conc2}).
\end{proof}

\begin{remark} It follows from Proposition \ref{pro3.2} that $\mathcal{F} _{x}$ is well-defined.\end{remark}

\noindent Up to now, we have found that the Lyapunov-Perron
operator is well-defined and Lipschitz continuous. Note that the
Lipschitz constant $C\left( \alpha ,\lambda\right) $ is
independent of the constant $\delta $ which is hidden in the
coefficients of system (\ref{3.2}). From now on, we choose and fix
the constant $\delta $ as follows $\delta :=\frac{1}{2C\left(
\alpha ,\lambda\right) }$.
\begin{lemma}
\label{lem3.1} Take $r>0$ such that
\begin{equation}
q:=C\left( \alpha ,\lambda\right) \ell_h\left( r\right) <1,
\label{3.5}
\end{equation}%
 and set
\begin{equation}
r^{\ast }=\frac{r\left( 1-q\right) }{\max_{1\leq i\leq
n}\sup_{t\geq
t_0}\Big|E_{\alpha }\left( \lambda _{i}\left( \frac{t^{\rho }-t_0^{\rho }}{\rho }%
\right) ^{\alpha }\right)\Big| }.  \label{3.6}
\end{equation}%
\newline
Let $B_{C_{\infty }}\left( 0,r\right) :=\left\{ \xi \in C_{\infty }\left( %
\left[ t_0,+\infty \right) ,%
%TCIMACRO{\U{211d} }%
%BeginExpansion
\mathbb{R}
%EndExpansion
^{d}\right) :\left\Vert \xi \right\Vert_\infty \leq r\right\} .$ Then, for any $%
x\in B_{\mathbb{R}^d}\left( 0,r^{\ast }\right) $ we have
$\mathcal{F}_{x}\left( B_{C_{\infty }}\left( 0,r\right) \right)
\subset B_{C_{\infty }}\left( 0,r\right) $ and
\begin{equation*}
\left\Vert \mathcal{F}_{x}\xi -\mathcal{F}_{x}\widehat{\xi
}\right\Vert _{\infty }\leq q\left\Vert \xi -\widehat{\xi
}\right\Vert _{\infty }\text{ \ \ for all }\xi ,\widehat{\xi }\in
B_{C_{\infty }}\left( 0,r\right) .
\end{equation*}
\end{lemma}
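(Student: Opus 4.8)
The plan is to read off both assertions directly from the two estimates \eqref{3.4} and \eqref{conc2} of Proposition \ref{pro3.2}, where the genuinely analytic work (the uniform bound on the Mittag-Leffler convolution kernel supplied by Remark \ref{pro2.2}) has already been done. What remains is elementary bookkeeping that exploits the monotonicity of $\ell_h$ together with the specific definitions of $q$ and $r^{\ast}$.

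First I would dispose of the contraction estimate. For $\xi,\widehat{\xi}\in B_{C_{\infty}}(0,r)$ we have $\|\xi\|_{\infty}\le r$ and $\|\widehat{\xi}\|_{\infty}\le r$, so $\max(\|\xi\|_{\infty},\|\widehat{\xi}\|_{\infty})\le r$. Since $\ell_h$ is nondecreasing in its argument --- it is a supremum over the ball $B_{\mathbb{R}^{d}}(0,r)$, which enlarges with $r$ --- one may replace $\ell_h(\max(\|\xi\|_{\infty},\|\widehat{\xi}\|_{\infty}))$ by $\ell_h(r)$ in \eqref{3.4}. Recalling the definition \eqref{3.5} of $q$, this gives at once
\[
\|\mathcal{F}_{x}\xi-\mathcal{F}_{x}\widehat{\xi}\|_{\infty}\le C(\alpha,\lambda)\,\ell_h(r)\,\|\xi-\widehat{\xi}\|_{\infty}=q\,\|\xi-\widehat{\xi}\|_{\infty},
\]
which is exactly the Lipschitz bound claimed, valid throughout $B_{C_{\infty}}(0,r)$.

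Next I would verify the self-mapping property. Abbreviate $M:=\max_{1\le i\le n}\sup_{t\ge t_0}\big|E_{\alpha}(\lambda_i((t^{\rho}-t_0^{\rho})/\rho)^{\alpha})\big|$, the denominator appearing in \eqref{3.6}. Fixing $x\in B_{\mathbb{R}^{d}}(0,r^{\ast})$ and $\xi\in B_{C_{\infty}}(0,r)$, and inserting $\|x\|\le r^{\ast}$ and $\|\xi\|_{\infty}\le r$ into \eqref{conc2}, I obtain
\[
\|\mathcal{F}_{x}\xi\|_{\infty}\le M\,r^{\ast}+C(\alpha,\lambda)\,\ell_h(r)\,r=M\,r^{\ast}+q\,r.
\]
The choice $r^{\ast}=r(1-q)/M$ in \eqref{3.6} is engineered precisely so that $M\,r^{\ast}=r(1-q)$, whence $\|\mathcal{F}_{x}\xi\|_{\infty}\le r(1-q)+qr=r$. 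Thus $\mathcal{F}_{x}\xi\in B_{C_{\infty}}(0,r)$, which is the inclusion $\mathcal{F}_{x}(B_{C_{\infty}}(0,r))\subset B_{C_{\infty}}(0,r)$.

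I do not expect any real obstacle: both halves are immediate corollaries of Proposition \ref{pro3.2}. The only point deserving genuine care is the monotonicity of $\ell_h$, which is what licenses replacing the running radii $\max(\|\xi\|_{\infty},\|\widehat{\xi}\|_{\infty})$ and $\|\xi\|_{\infty}$ by the single fixed radius $r$ over the whole ball. Beyond that, one simply observes that the definitions of $q$ (chosen in \eqref{3.5} to keep the contraction factor strictly below $1$) and of $r^{\ast}$ (chosen in \eqref{3.6} to make the two contributions in \eqref{conc2} sum to exactly $r$) are tailored so that the two estimates close up with no slack, and the lemma follows.
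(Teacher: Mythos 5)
Your proposal is correct and takes essentially the same approach as the paper: both halves are read off from Proposition \ref{pro3.2} by inserting $\left\Vert x\right\Vert \leq r^{\ast }$ and $\left\Vert \xi \right\Vert _{\infty }\leq r$ into \eqref{conc2} (so that the two terms sum to $\left( 1-q\right) r+qr=r$) and by combining \eqref{3.4} with \eqref{3.5} for the contraction bound. Your explicit remark on the monotonicity of $\ell_h$ merely makes precise a step the paper uses tacitly, so there is no substantive difference.
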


\begin{proof}
Let $x\in
%TCIMACRO{\U{211d} }%
%BeginExpansion
\mathbb{R}
%EndExpansion
^{d}$ with $\left\Vert x\right\Vert \leq r^{\ast }.$ Let $\xi
\in $\ $B_{%
%TCIMACRO{\U{211d} }%
%BeginExpansion
C
%EndExpansion
_{\infty}}\left( 0,r\right) $. It follows from (\ref{conc2})
\begin{eqnarray*}
\left\Vert \mathcal{F}_{x}\xi \right\Vert _{\infty } &\leq
&\max_{1\leq i\leq n}\sup_{t\geq t_0}\Big|E_{\alpha }\left(
\lambda _{i}\left( \frac{t^{\rho }-t_0^{\rho }}{\rho }\right)
^{\alpha }\right)\Big| \left\Vert x\right\Vert \\
&&+C\left( \alpha ,\lambda\right) \ell_h\left( r\right) \left\Vert
\xi \right\Vert _{\infty } \\
&\leq &\left( 1-q\right) r+qr=r,
\end{eqnarray*}%
then $\mathcal{F}_{x}\left( B_{C_{\infty }}\left( 0,r\right)
\right) \subset B_{C_{\infty }}\left( 0,r\right) .$\\
Then for any $x\in B_{\mathbb{R}^d}\left( 0,r^{\ast }\right) $ and
$\xi ,\widehat{\xi }\in $\ $B_{C_{\infty }}\left( 0,r\right) $, it
follows from (\ref{3.4}) and (\ref{3.5}) that
\begin{eqnarray*}
\left\Vert \mathcal{F}_{x}\xi -\mathcal{F}_{x}\widehat{\xi
}\right\Vert _{\infty } &\leq &C\left( \alpha ,\lambda \right)
\ell_h\left(
r\right) \left\Vert \xi -\widehat{\xi }\right\Vert _{\infty } \\
&\leq &q\left\Vert \xi -\widehat{\xi }\right\Vert _{\infty }.
\end{eqnarray*}%
The proof is completed.
\end{proof}
\noindent Consider system (\ref{1.1s}):
$$^{C}D_{t_0^{+}}^{\alpha ,\rho }x\left( t\right) =A x(t)+
f\left(x\left( t\right) \right),\ t \geq t_0.$$ We now state the
main result of this paper.
\begin{theorem}\label{the3.1}  Assume that $\sigma(A)\subset\{\lambda \in
\mathbb{C}:\ |arg(\lambda)|>\frac{\alpha \pi}{2}|\}$ and the
nonlinear term $f$ is a locally Lipschitz continuous function
satisfying (\ref{1.3}). Then, the zero solution of (\ref{1.1s}) is
asymptotically stable.
\end{theorem}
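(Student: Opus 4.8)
The plan is to combine the contraction mapping principle with the machinery already assembled. By Remark \ref{rem3.2} it suffices to prove asymptotic stability of the zero solution of the transformed system \eqref{3.2}, and by the concluding remark of Section 2 every solution of \eqref{3.2} with initial value $x$ is exactly the fixed point of the Lyapunov--Perron operator $\mathcal{F}_x$. First I would record that, with the choice $\delta=\frac{1}{2C(\alpha,\lambda)}$ fixed above, Remark \ref{rem3.1} gives $\lim_{r\to0}C(\alpha,\lambda)\ell_h(r)\le\frac12<1$; hence there is an $r_0>0$ such that $q:=C(\alpha,\lambda)\ell_h(r)<1$ for every $r\le r_0$, which is precisely the hypothesis \eqref{3.5} of Lemma \ref{lem3.1}.

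For stability, given $\varepsilon>0$ I would pick $r\le\min(\varepsilon,r_0)$ and set $\delta(\varepsilon):=r^\ast$ as in \eqref{3.6}. By Lemma \ref{lem3.1}, for $\|x_0\|\le r^\ast$ the map $\mathcal{F}_{x_0}$ sends the complete ball $B_{C_\infty}(0,r)$ into itself and is a $q$-contraction there; the contraction mapping principle then furnishes a unique fixed point $\xi\in B_{C_\infty}(0,r)$. This $\xi$ is the globally defined solution $y(\cdot,x_0)$ of \eqref{3.2} (so $t_{\max}=\infty$), and $\|y(t,x_0)\|\le\|\xi\|_\infty\le r\le\varepsilon$ for all $t\ge t_0$, which is exactly stability in the sense of Definition \ref{def2.3}.

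For attractivity I would work inside the set $B_{C_\infty}(0,r)\cap C_\infty^0$, where
$$C_\infty^0:=\Big\{\xi\in C_\infty\big([t_0,+\infty),\mathbb{R}^d\big):\ \lim_{t\to\infty}\xi(t)=0\Big\}$$
is closed in $C_\infty$ (a uniform limit of functions vanishing at infinity vanishes at infinity), so this set is itself complete. Fixing one $r\le r_0$ and taking $c:=r^\ast$, the heart of the matter is to verify that $\mathcal{F}_x$ maps this set into itself, i.e.\ that $\mathcal{F}_x\xi$ vanishes at infinity whenever $\xi$ does. The homogeneous term $E_\alpha\big(\lambda_i(\tfrac{t^\rho-t_0^\rho}{\rho})^\alpha\big)x^i\to0$ because each $\lambda_i$ lies in the sector $|\arg\lambda_i|>\frac{\alpha\pi}{2}$, where the Mittag--Leffler function decays. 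For the convolution term I would split $\int_{t_0}^t=\int_{t_0}^{T}+\int_{T}^{t}$: choosing $T$ so large that $\|h^i(\xi(s))\|$ is small for $s\ge T$ (possible since $\xi(s)\to0$, $h(0)=0$, and $h$ is Lipschitz) bounds the tail by that smallness times the uniform constant $C(\alpha,\lambda)$ of Remark \ref{pro2.2}, while on the fixed interval $[t_0,T]$ Proposition \ref{pro2.1}$(i)$ forces the kernel $(\tfrac{t^\rho-s^\rho}{\rho})^{\alpha-1}E_{\alpha,\alpha}\big(\lambda_i(\tfrac{t^\rho-s^\rho}{\rho})^\alpha\big)$ to $0$ uniformly in $s$ as $t\to\infty$, so the head also vanishes. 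Hence $\mathcal{F}_x\big(B_{C_\infty}(0,r)\cap C_\infty^0\big)\subset B_{C_\infty}(0,r)\cap C_\infty^0$.

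Finally, since $\mathcal{F}_x$ is a $q$-contraction on the complete set $B_{C_\infty}(0,r)\cap C_\infty^0$, its fixed point there must coincide, by uniqueness of the fixed point in the larger ball $B_{C_\infty}(0,r)$, with $y(\cdot,x_0)$; therefore $\lim_{t\to\infty}y(t,x_0)=0$ for every $\|x_0\|\le r^\ast$. Together with the stability established above and Remark \ref{rem3.2}, this yields the asymptotic stability of the zero solution of \eqref{1.1s}. I expect the one genuinely delicate step to be the decay of the convolution term, where the split into a ``head'' controlled by the pointwise kernel estimate of Proposition \ref{pro2.1}$(i)$ and a ``tail'' controlled by the uniform integral bound of Remark \ref{pro2.2} must be made quantitative.
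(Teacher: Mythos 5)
Your proposal is correct, and its skeleton --- reduction to \eqref{3.2} via Remark \ref{rem3.2}, the Lyapunov--Perron fixed point supplied by Lemma \ref{lem3.1} and the Banach principle, and a head/tail splitting of the convolution term controlled by Proposition \ref{pro2.1}$(i)$ on the head and the uniform bound of Remark \ref{pro2.2} on the tail --- coincides with the paper's. Where you genuinely diverge is the mechanism for attractivity. The paper argues directly on the fixed point $\xi$ itself: setting $b:=\limsup_{t\rightarrow\infty}\left\Vert \xi(t)\right\Vert$ and splitting the integral at $T(\epsilon)$, it derives the self-bounding inequality $b\leq C(\alpha,\lambda)\,\ell_h(r)(b+\epsilon)$, lets $\epsilon\rightarrow 0$, and invokes $q=C(\alpha,\lambda)\ell_h(r)<1$ a second time to force $b=0$. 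You instead restrict the contraction to the closed subspace $C_\infty^0$ of functions vanishing at infinity, verify that $\mathcal{F}_x$ leaves $B_{C_\infty}(0,r)\cap C_\infty^0$ invariant --- your head/tail estimate is exactly where the paper's computation reappears, applied to an arbitrary $\xi\in C_\infty^0$ rather than to the fixed point --- and then conclude decay from uniqueness of the fixed point in the larger ball. The two routes rest on the same kernel estimates; yours is more structural and spends the hypothesis $q<1$ only once (for the contraction), at the price of checking completeness of $B_{C_\infty}(0,r)\cap C_\infty^0$ and the invariance, while the paper's limsup argument avoids auxiliary complete sets but needs the smallness of $q$ again at the very end. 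Two incidental merits of your write-up: you make the stability half explicit by shrinking $r\leq\min(\varepsilon,r_0)$ and taking $\delta(\varepsilon)=r^{\ast}$ from \eqref{3.6}, and you justify that radii satisfying \eqref{3.5} exist at all, via the choice $\delta=\frac{1}{2C(\alpha,\lambda)}$ and Remark \ref{rem3.1}; the paper leaves both points tacit. (Like the paper, you pass silently over the identification of the fixed point with the maximal solution of the initial value problem, i.e.\ that the solution cannot leave $B_{C_\infty}(0,r)$ and blow up, so you match rather than fall below its level of rigor there.)
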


\begin{proof}
In reason of Remark \ref{rem3.2}, it is sufficient to prove the asymptotic stability for the zero solution of system (\ref{3.2}). To do that, let $r^{\ast }$\ be defined as in (\ref{3.6}). Let $x$ $\in B_{%
%TCIMACRO{\U{211d} }%
%BeginExpansion
\mathbb{R}
%EndExpansion
^{d}}\left( 0,r^{\ast }\right) $. Using the Banach fixed point
theorem and Lemma \ref{lem3.1}, there exists a unique fixed point
$\xi \in $\ $B_{C_{\infty }}\left( 0,r\right) $ of
$\mathcal{F}_{x}.$\ This point is also a solution of (\ref{3.2})
with the initial condition $\xi \left( t_0\right) =x.$ The zero
solution $0$ is stable, since the initial value problem for
Equation (\ref{3.2}) has unique solution. To finish the proof of
the theorem, we have to demonstrate that the zero solution $0$ is
attractive. Let $x=\left(
x^{1},x^{2},...,x^{n}\right) \in B_{%
%TCIMACRO{\U{211d} }%
%BeginExpansion
\mathbb{R}
%EndExpansion
^{d}}\left( 0,r^{\ast }\right)$ and $\xi \left( t\right) =\left(
\left( \xi \right) ^{1}\left( t\right) ,\left( \xi \right)
^{2}\left( t\right)
,...,\left( \xi \right) ^{n}\left( t\right) \right) $ the solution of (\ref%
{3.2}) which satisfies $\xi \left( t_0\right) =x$.\\
From Lemma \ref{lem3.1}, we see that $%
\left\Vert \xi \right\Vert _{\infty }\leq r.$ Let
$b:=\displaystyle\limsup_{t\rightarrow \infty }\left\Vert \xi
\left( t\right) \right\Vert $ thus $b\in \left[ 0,r\right] .$ Let
$\epsilon >0$. Then, there exists $T\left( \epsilon \right) >t_0$
such that
\begin{equation*}
\left\Vert \xi \left( t\right) \right\Vert <b+\epsilon \text{ \ \ for all }%
t\geq T\left( \epsilon \right).
\end{equation*}%
Using Proposition \ref{pro2.1} $(i)$, we get
\begin{eqnarray*}
&&\displaystyle\limsup_{t\rightarrow \infty
}\Big\|\int\limits_{t_0}^{T\left( \epsilon \right) }\left(
\frac{t^{\rho }-s^{\rho }}{\rho }\right) ^{\alpha -1}E_{\alpha
,\alpha }\left( \lambda _{i}\left( \frac{t^{\rho }-s^{\rho }}{\rho
}\right)
^{\alpha }\right) s^{\rho -1}h^{i}\left( \xi \left( s\right) \right) ds \Big\|\\
&\leq &\max_{t\in \left[ t_0,T\left( \epsilon \right) \right]
}\left\Vert h^{i}\left( \xi \left( t\right) \right) \right\Vert
\displaystyle\limsup_{t\rightarrow \infty
}\int\limits_{t_0}^{T\left( \epsilon \right) }\frac{M\left( \alpha
,\lambda _{i}\right) }{\left( \frac{t^{\rho }-s^{\rho }}{\rho
}\right)
^{\alpha +1}}s^{\rho -1}ds \\
&\leq &\max_{t\in \left[ t_0,T\left( \epsilon \right) \right]
}\left\Vert h^{i}\left( \xi \left( t\right) \right) \right\Vert
\displaystyle\limsup_{t\rightarrow \infty
}\int\limits_{t_0}^{T\left( \epsilon \right) }\frac{\rho ^{\alpha
+1}M\left( \alpha ,\lambda _{i}\right) }{\left( t^{\rho }-s^{\rho
}\right)
^{\alpha +1}}s^{\rho -1}ds \\
&\leq &\max_{t\in \left[ t_0 ,T\left( \epsilon \right) \right]
}\left\Vert h^{i}\left( \xi \left( t\right) \right) \right\Vert
\displaystyle\limsup_{t\rightarrow \infty }\int\limits_{t_0^{\rho }}^{T\left( \epsilon \right) ^{\rho }}\frac{%
\rho ^{\alpha}M\left( \alpha ,\lambda _{i}\right) }{\left( t^{\rho
}-u\right) ^{\alpha +1}}du=0.
\end{eqnarray*}%
Thus, it follows from the fact that $\xi ^{i}\left( t\right)
=\left( \mathcal{F}_{x}\xi \right) ^{i}\left( t\right) $ and
$\displaystyle\lim_{t\rightarrow \infty }\
E_{\alpha }\left( \lambda _{i}\left( \frac{t^{\rho }-t_0^{\rho }}{\rho }%
\right) ^{\alpha }\right) =0$ that
\begin{eqnarray*}
\displaystyle\limsup_{t\rightarrow \infty }\left\Vert \xi
^{i}\left( t\right) \right\Vert
&=&\displaystyle\limsup_{t\rightarrow \infty
}\Big\|\int\limits_{T\left( \epsilon \right) }^{t}\left(
\frac{t^{\rho }-s^{\rho }}{\rho }\right) ^{\alpha -1}E_{\alpha
,\alpha }\left( \left( \frac{t^{\rho }-s^{\rho }}{\rho }\right)
^{\alpha }\right) s^{\rho -1}h^{i}\left( \xi \left( s\right)
\right) ds \Big\|\\
&\leq &\ell_h\left( r\right) C\left( \alpha ,\lambda _{i} \right)
\left( b+\epsilon \right).
\end{eqnarray*}%
Therefore,
\begin{eqnarray*}
b &\leq &\max \left\{ \displaystyle\limsup_{t\rightarrow \infty
}\left\Vert \xi ^{1}\left( t\right) \right\Vert
,\displaystyle\limsup_{t\rightarrow \infty }\left\Vert \xi
^{2}\left( t\right) \right\Vert ,...,\displaystyle\limsup_{t\rightarrow \infty }\left\Vert \xi ^{n}\left( t\right) \right\Vert \right\}  \\
&\leq &\ell_h\left( r\right) C\left( \alpha ,\lambda \right)
\left( b+\epsilon \right) .
\end{eqnarray*}%
We tend $\epsilon$ to zero we find,
\begin{equation*}
b\leq \ell_h\left( r\right) C\left( \alpha ,\lambda  \right) b.
\end{equation*}%
From the assumption $\ell_h\left( r\right) C\left( \alpha ,\lambda
\right) <1$, we obtain that $b=0$. This ends the proof.
\end{proof}
\section{Application}
To illustrate the theoretical result and to show its
effectiveness, the Caputo-Katugampola fractional-order Lorenz system is
given as an example. Such system can be written as:
\begin{equation}
\left\{
\begin{array}{l}
^{C}D_{t_0^{+}}^{\alpha ,\rho }x_1\left( t\right) =a\left( x_{1}\left( t\right)-x_{2}\left( t\right)\right)  \\
^{C}D_{t_0^{+}}^{\alpha ,\rho }x_2\left( t\right) =bx_{1}\left( t\right)-x_{1}\left( t\right)x_{3}\left( t\right)-cx_{2}\left( t\right) \\
^{C}D_{t_0^{+}}^{\alpha ,\rho }x_3\left( t\right) =x_{1}\left( t\right)x_{2}\left( t\right)-dx_{3}\left( t\right)%
\end{array}%
\right.
\end{equation}%
where $a,b,c$ and $d$ are four parameters and $\alpha ,\rho $ are
the fractional-orders. It can be rewritten as
\begin{equation*}
^{C}D_{t_0^{+}}^{\alpha ,\rho }x\left( t\right) =Ax\left(
t\right)+g\left( x\left( t\right)\right) ,
\end{equation*}%
where $x=\left( x_{1},x_{2},x_{3}\right) ^{T},A=\left(
\begin{array}{ccc}
a & -a & 0 \\
b & -c & 0 \\
0 & 0 & -d%
\end{array}%
\right) $ and $g\left( x\right) =\left(
\begin{array}{c}
0 \\
-x_{1}x_{3} \\
x_{1}x_{2}%
\end{array}%
\right). $\newline Let consider $a=-8,b=26,c=-7,d=3,\alpha =0.9,$
and $\rho =1.2$\newline
We design the linear state feedback controller as $u=BKx$ and select :%
\begin{equation*}
B=\left(
\begin{array}{c}
0 \\
1 \\
0%
\end{array}%
\right) \text{ and }K=\left(
\begin{array}{ccc}
0 & -50 & 0%
\end{array}%
\right).
\end{equation*}
The eigenvalue of the matrix $A+BK$ are $\lambda_1=-2.8229$,
$\lambda_2=-48.1771$ and $\lambda_3=-3$ which make $\left\vert
\arg \left(\lambda_i\right) \right\vert >\frac{\alpha \pi }{2},\
i\in \{1,2,3\}.$ Thus, according to Theorem \ref{the3.1}, the zero
solution of the closed-loop system
\begin{equation*}
^{C}D_{t_0^{+}}^{\alpha ,\rho }x\left( t\right) =Ax\left(
t\right)+g\left( x\left( t\right)\right)+Bu(t) ,
\end{equation*}%
is asymptotically stable.

\end{document}